\documentclass[a4paper]{amsart}

\usepackage{amssymb}
\usepackage{amsmath}
\usepackage{latexsym}
\usepackage{amsmath}
\usepackage{euscript}
\usepackage{url}

\begin{document}

\newtheorem{thm}{Theorem}
\newtheorem{lemma}[thm]{Lemma}
\newtheorem{cor}[thm]{Corollary}

\title[Preprint]{A new version of an old modal incompleteness theorem}
\author[Submitted to the Bulletin of the Section of Logic]{Jacob Vosmaer}
\date{February 10, 2006}
\email{contact@jacobvosmaer.nl}
\begin{abstract}
  Thomason \cite{Thomason74} showed that a certain modal logic
  $\mathbf{L}\subset \mathbf{S4}$ is incomplete with respect to Kripke
  semantics. Later Gerson \cite{Gerson75} showed that $\mathbf{L}$ is
  also incomplete with respect to neighborhood semantics. In this
  paper we show that $\mathbf{L}$ is in fact incomplete with respect
  to any class of complete Boolean algebras with operators, i.e.~that
  it is completely incomplete.
\end{abstract}

\maketitle

\section{Introduction}
In 1974, two modal incompleteness theorems were published in the same
issue of the same journal. Fine \cite{Fine74} presented a logic above
$\mathbf{S4}$ and Thomason \cite{Thomason74} presented one between
$\mathbf{T}$ and $\mathbf{S4}$, and both showed that their logics were
incomplete with respect to Kripke semantics. In 1975, a paper by
Gerson \cite{Gerson75} followed in which he showed that both logics
were both also incomplete with respect to neighborhood semantics.
Then, in 2003 Litak \cite{Litak03} showed that Fine's logic is in fact
as he calls it completely incomplete, i.e.~it is incomplete with
respect to any class of Boolean algebras with operators (or BAOs for
short). It is know that Kripke frames correspond to the class of
complete, atomic and completely distributive BAOs and that
neighborhood frames (for normal logics such as the ones we are
considering) correspond to the class of complete, atomic BAOs. In the
present paper, we show what one might almost call a complement to
Litak's result, i.e.~that Thomason's logic is also completely
incomplete. 

\section{An incompleteness theorem}

\subsection{Algebraic preliminaries}
When considering an arbitrary complete BAO $\mathfrak{A}$ below, we
will always assume there is some Kripke frame $\langle W,R\rangle$
such that $\mathfrak{A}=\langle A,\wedge,-,0,\Diamond\rangle$ is a
subalgebra of $\langle
\wp(W),\cap,\phantom{i}^c,\emptyset,m_R\rangle$, where $\phantom{i}^c$
is set-theoretic complementation with respect to $W$ and for
$X\subseteq W$ and $m_R(X):=\{ w\in W\mid \exists v\in X \, (wRv)\}$;
the J\'onsson-Tarski representation theorem tells us that any BAO is
such a subalgebra up to isomorphism. We will make use of a few observations
about suprema in $\mathfrak{A}$.  Let $\{a_n\mid n\in\omega\},\{b_n
\mid n\in\omega\}$ be arbitrary subsets of $\mathfrak{A}$. First of
all, we will use without mentioning the fact that
$\bigcup_{n\in\omega} a_n\leq \bigvee_{n\in\omega} a_n$.  Secondly,
\begin{equation}\label{alg:SumSup}
  \bigcup_{n\in\omega}a_n \subseteq \bigcup_{n\in\omega} b_n \text{ implies
  } \bigvee_{n\in\omega}a_n \leq \bigvee_{n\in\omega} b_n,
\end{equation}
as $\bigcup a_n\leq \bigcup b_n \leq \bigvee b_n$, so $\bigvee a_n$,
being the \emph{least} upperbound of $\{a_n\mid n\in\omega\}$ in
$\mathfrak{A}$, must be below $\bigvee b_n$. Thirdly,
\begin{equation}\label{alg:Disjoint}
\bigcup_{n\in\omega} a_n \cap \bigcup_{n\in\omega} b_n=\emptyset \text{
  implies } \bigvee_{n\in\omega} a_n \wedge \bigvee_{n\in\omega} b_n
=0,
\end{equation}
for if $\bigcup a_n\cap \bigcup b_n=\emptyset$ but $\bigvee a_n \wedge
\bigvee b_n> 0$ then $\bigvee a_n \cap \bigcup b_n>0$. If this were
not the case, then we would get $\bigcup b_n \subseteq \bigvee b_n
\setminus \bigvee a_n\in\mathfrak{A}$, contradicting the fact that
$\bigvee b_n$ is least in $\mathfrak{A}$. So, there must be some $b_i$
such that $b_i\cap \bigvee a_n>0$, and now we know that $\bigcup
a_n\nsubseteq \bigvee a_n \setminus b_i$, for otherwise $\bigvee a_n$
would not be least. It follows that there must be some $a_j$ such that
$a_j \wedge b_i>0$; however this contradicts our assumption that
$\bigcup a_n\cap \bigcup b_n=\emptyset$. It follows that
\eqref{alg:Disjoint} is true.  Finally,
\begin{equation}\label{alg:Distr}
\bigvee_{n\in\omega} \Diamond a_n\leq \Diamond \bigvee_{n\in\omega}
a_n.
\end{equation}
Since for any $k\in\omega$ and $w\in \Diamond A_k=m_R(A_k)$ it must be
the case that $wRv$ for some $v\in A_k \subseteq \bigcup a_n \subseteq
\bigvee a_n$, so that $w\in \Diamond \bigvee a_n$, whence $\bigcup
\Diamond a_n \subseteq \Diamond \bigvee a_n$. It follows that $\bigvee
\Diamond a_n\leq \Diamond \bigvee a_n$.

\subsection{A case of complete incompleteness}
Consider the formulas
\begin{align*}
&A_i:=\Box (q_i\rightarrow r),\\
&B_i:=\Box (r\rightarrow \Diamond q_i)\quad (i=1,2),\\
& C_1:=\Box \neg (q_1\wedge q_2),\\
&A:= r\wedge\Box p\wedge\neg \Box^2
p\wedge A_1\wedge A_2
\wedge B_1\wedge B_2\wedge  C_1\\
&\phantom{A:=}\rightarrow \Diamond(r\wedge\Box(r\rightarrow q_1\vee
q_2),\\
&B:=\Box(p\rightarrow q)\rightarrow (\Box p\rightarrow \Box q),\\
&C:=\Box p\rightarrow p,\\
&D:=(p\wedge \Diamond^2 q)\rightarrow (\Diamond q\vee
\Diamond^2(q\wedge\Diamond p)),\\
&E:=(\Box p\wedge \neg \Box^2 p)\rightarrow \Diamond (\Box^2 p\wedge
\neg\Box^3 p),\\
&F:=\Box p\rightarrow \Box^2 p.
\end{align*}
Let $\mathbf{L}$ be the logic containing all propositional
tautologies, $A,B,C,D$ and $E$ and closed under modus ponens,
substitution and necessitation (this is the same logic as found in
\cite{Thomason74}). It is not hard to see that $\mathbf{T}\subseteq
\mathbf{L}\subseteq \mathbf{S4}$. We will see below that the latter
inclusion is  strict, because $\mathbf{S4}\ni F\notin \mathbf{L}$.
\begin{lemma}
Let $\mathfrak{A}$ be a complete BAO. If $\mathfrak{A}\models L$, then
$\mathfrak{A}\models F$.
\end{lemma}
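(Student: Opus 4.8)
The plan is to prove the contrapositive: assuming $\mathfrak{A}\not\models F$, I will manufacture a valuation falsifying the axiom $A$, contradicting $\mathfrak{A}\models\mathbf{L}$. Writing $\Box x:=-\Diamond-x$, validity of $F$ in $\mathfrak{A}$ is just the inequality $\Box a\le\Box^2 a$ for every $a\in A$, so I begin by fixing a witness $a_0$ with $d_0:=\Box a_0\wedge\neg\Box^2 a_0>0$. Since $\mathfrak{A}\models C$ is the $\mathbf{T}$-axiom $\Box x\le x$ (equivalently $x\le\Diamond x$), the powers $\Box^n a_0$ form a descending chain, a fact I will use throughout.

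First I would extract an infinite ladder from $E$. Instantiating the valid formula $E$ at $\Box^n a_0$ gives $d_n\le\Diamond d_{n+1}$, where $d_n:=\Box^{n+1}a_0\wedge\neg\Box^{n+2}a_0$. From $\Diamond 0=0$ and $d_0>0$ it follows by induction that every $d_n>0$, and the descending chain makes the $d_n$ pairwise disjoint (for $m>n$ one has $d_m\le\Box^{n+2}a_0$ while $d_n\le\neg\Box^{n+2}a_0$). This ladder is the backbone of the counterexample.

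Next I use completeness to name a two-colouring and a limit. Put $q_1:=\bigvee_n d_{2n}$, $q_2:=\bigvee_n d_{2n+1}$, $c:=q_1\vee q_2=\bigvee_n d_n$ (the coloured ladder), take $p:=a_0$, and set $r:=\Box a_0$; a telescoping-join computation using \eqref{alg:SumSup} shows the uncoloured part is exactly the limit $r\wedge\neg c=\ell:=\bigwedge_n\Box^n a_0$, which exists precisely because $\mathfrak{A}$ is complete. Under this valuation the routine side conditions hold: $A_i=\Box(q_i\to r)$ holds since $q_i\le c\le r$; $C_1=\Box\neg(q_1\wedge q_2)$ holds because disjointness of the $d_n$ gives $q_1\wedge q_2=0$ by \eqref{alg:Disjoint}; and for the rungs of the ladder, reflexivity together with $d_n\le\Diamond d_{n+1}$ yields $d_n\le\Diamond q_i$ for each $n$ and both $i$, which I can lift along the join by \eqref{alg:SumSup} and \eqref{alg:Distr}. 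The antecedent of $A$ then collapses to $d_0\wedge\top=d_0>0$, so validity of $A$ forces $d_0\le\Diamond\bigl(r\wedge\Box(r\to c)\bigr)=\Diamond\bigl(r\wedge\neg\Diamond(r\wedge\neg c)\bigr)$.

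The contradiction is meant to come from showing that the element inside this last $\Diamond$ is $0$, i.e. that $r\le\Diamond(r\wedge\neg c)=\Diamond\ell$, so that the consequent reduces to $\Diamond 0=0<d_0$. I expect this to be the main obstacle. Two facts of the same flavour remain: that the limit sees both colours, $\ell\le\Diamond q_i$ (still needed to finish $B_i=\Box(r\to\Diamond q_i)$ off the ladder), and that every rung sees the uncoloured limit, $d_n\le\Diamond\ell$. Both are \emph{reachability across the infinite join}, and both are delicate because $\Diamond$ does not commute with the meet defining $\ell$ (one only has $\Diamond\bigwedge_n\Box^n a_0\le\bigwedge_n\Diamond\Box^n a_0$, the wrong direction). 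It is exactly here that I anticipate needing axiom $D$ and a further appeal to \eqref{alg:Distr} to transport one-step reachability to the limit. Everything else is bookkeeping with \eqref{alg:SumSup}--\eqref{alg:Distr}; the crux is forcing the forbidden limit point into existence and making it visible, which is possible only because $\mathfrak{A}$ is complete and is precisely what distinguishes complete BAOs from the general frames on which $\mathbf{L}$ fails to derive $F$.
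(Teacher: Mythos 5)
Your opening moves match the paper's: pass to the contrapositive, fix $a$ with $\Box a\wedge\neg\Box^2 a>0$, use $E$ to build the ladder $d_n=\Box^{n+1}a\wedge\neg\Box^{n+2}a$ of nonzero, pairwise disjoint rungs, and use completeness to form the joins. But from there your valuation --- two colour classes, $r:=\Box a_0$, and the limit $\ell:=\bigwedge_n\Box^n a_0$ --- leads to obligations that are not consequences of $B,C,D,E$, and the gap you flag is not closable. Concretely, you need $d_n\leq\Diamond\ell$ (to get $r\leq\Diamond\ell$ and hence kill the consequent of $A$) and $\ell\leq\Diamond q_i$ (to finish $B_i$). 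In the full powerset algebra of the recession frame $\langle\omega,R\rangle$ with $nRm$ iff $m\geq n-1$, which is a complete BAO validating $B,C,D,E$ and refuting $F$ (take $a=[1,\infty)$, so $\Box^n a=[n+1,\infty)$), one has $\ell=\bigcap_n[n+1,\infty)=\emptyset$ while every $d_n$ is a nonempty singleton; so $d_n\leq\Diamond\ell=\Diamond 0=0$ is simply false, and no amount of appealing to $D$ will derive it. In that situation $\Box(r\rightarrow q_1\vee q_2)=\neg\Diamond\ell=1$, the consequent of $A$ becomes $\Diamond r\geq r\geq d_0$, and $A$ is \emph{satisfied} by your valuation, so no contradiction arises. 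The root cause is the two-colouring: if the colour classes jointly exhaust the rungs, then the ``uncoloured part'' of $r$ is forced to be a limit object, and nothing in the axioms makes limits reachable.

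The paper's proof sidesteps the limit entirely. It sets $r:=\bigvee_{n\geq 1}b_n$ (the join of the rungs themselves, not $\Box a$) and uses \emph{three} colour classes $q_i:=\bigvee_{n\geq 0}b_{3n+i}$. Then $r\wedge\neg q_1\wedge\neg q_2=q_3$ exactly, so the element inside the consequent's $\Diamond$ is $r\wedge\neg\Diamond q_3$, and this is $0$ as soon as $r\leq\Diamond q_3$. Establishing that requires every rung to see some later rung of \emph{every} residue class, i.e.\ the strengthened ladder property $b_i\leq\Diamond b_j$ for all $i<j$, not just $j=i+1$. This is where axiom $D$ actually enters (not at a limit point, as you anticipated): from $b_i\leq\Diamond b_n$ and $b_n\leq\Diamond b_{n+1}$ one gets $b_i\leq\Diamond^2 b_{n+1}$, one checks $b_{n+1}\wedge\Diamond b_i=0$, and $D$ then collapses $\Diamond^2$ to $\Diamond$. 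With that in hand the antecedent of $A$ evaluates to $b_1>0$ and the consequent to $\Diamond 0=0$, which is the desired contradiction. So the missing idea in your proposal is the combination of (a) three colours rather than two and (b) taking $r$ to be the join of the rungs, which together make the forbidden ``point of $r$ seeing only $q_1\vee q_2$'' equal to $0$ by pure supremum reasoning, with no limit element ever needed.
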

\begin{proof}
  Let $\mathfrak{A}$ be a complete BAO on which $B$, $C$, $D$ and $E$
  are valid\footnote{We are abusing language here, for we should
    really say $\mathfrak{A}\models B=1$ instead of
    $\mathfrak{A}\models B$. We trust that confusion will not ensue,
    however.}, but $F$ is not. We will show that
  $\mathfrak{A}\not\models A$, proving the statement of the lemma.

  The fact that $\mathfrak{A}\not\models F$ must be witnessed by some
  $a\in \mathfrak{A}$ such that $\Box a\nleq \Box^2 a$. Since by $C$,
  $\Box^2 a\leq \Box a$, it follows that $\Box^2a<\Box a$. For $n\geq
  1$ we define
  \[ b_n:= \Box^n a\setminus \Box^{n+1} a,\] where $c\setminus d:=
  c\wedge -d$. By the above, we already know that $b_1>0$. To
  inductively show that all $b_n>0$, suppose that $b_n>0$, but
  $b_{n+1}=0$. Then substitute\footnote{$\Box^0 a:=a$.} $\Box^{n-1} a$
  for $p$ in $E$, so we get
  \[ b_n= \Box \Box^{n-1} a \wedge - (\Box^2 \Box^{n-1}
  a)\leq \Diamond (\Box^2 \Box^{n-1}a \wedge - (\Box^3\Box^{n-1}
  a))= \Diamond b_{n+1}=\Diamond 0=0, \]
  which is a contradiction, so it must be that $b_{n+1}>0$. This
  completes our induction. Note that if $1\leq i<j$, then since
  $b_j\leq \Box^j a \leq \Box^{i+1} a\leq -b_i$, it must be that $b_i\wedge
  b_j=0$. Next, suppose that 
  \begin{equation}\label{thm:Cover}
    \text{for all } 1\leq i<j\leq  n,\quad   b_i\leq \Diamond b_j
  \end{equation}
  (the base case $n=2$ follows immediately from $E$). We will show
  that \eqref{thm:Cover} must also hold for $n+1$. We only consider
  $j=n+1$ and $i<n$ (for if $i=n$, we can immediately apply $E$ and
  $i,j\leq n$ is already covered by \eqref{thm:Cover}). By our
  induction hypothesis, $b_i\leq \Diamond b_n$ and by $E$, $b_n\leq
  \Diamond b_{n+1}$, so we have $b_i\leq \Diamond^2b_{n+1}$,
  i.e.~$b_i=b_i\wedge \Diamond^2b_{n+1}$. Reverting to definitions, we
  find that $\Diamond b_i=-\Box-(\Box^i
  a\setminus \Box^{i+1} a)$. As $-(\Box^i a\setminus \Box^{i+1}a )\leq
  \Box^{i+1} a$, we get that $\Box-(\Box^i
  a\setminus \Box^{i+1} a)\leq \Box^{i+2} a$, so $\Diamond b_i\wedge
  \Box^{i+2} a=0$. Since also $b_{n+1}\leq \Box^{n+1} a\leq
  \Box^{i+2}a$ (as $i<n$), it follows that $b_{n+1}\wedge \Diamond
  b_i=0$, so substituting $b_i$ for $p$ and $b_{n+1}$ for $q$ in $D$,
  we find that
  \[ b_i=b_i\wedge \Diamond^2 b_{n+1}\leq \Diamond b_{n+1}\vee
  \Diamond^2(b_{n+1}\wedge \Diamond b_i)=\Diamond b_{n+1}\vee
  \Diamond^2 0=\Diamond b_{n+1}.\] It follows that \eqref{thm:Cover}
  holds for $n+1$, so by induction \eqref{thm:Cover} is true for all
  $n\geq 2$.

  Now we define the following elements of $\mathfrak{A}$: 
  \[ p:=a, \quad q_i:=\bigvee_{n\geq 0} b_{3n+i} \quad(i=1,2,3),\quad
  r:=\bigvee_{n\geq 1} b_n.\] (Note that this is where we use the
  assumption that $\mathfrak{A}$ is complete.) We will use these
  elements to show that $A$ is not valid. First of all, as
  $\bigcup_{n\geq 0} b_{3n+i}\subseteq \bigcup_{n\geq 1} b_n$, it
  follows by \eqref{alg:SumSup} that $q_i\leq r$ for $i=1,2,3$, so
  $q_i\rightarrow r=1$, whence $A_1=A_2=\Box 1=1$. Secondly, by
  \eqref{thm:Cover}, for any $n\geq 1$ there must exist a $k\in\omega$
  such that $b_n\leq \Diamond b_{3k+i}$, whence $\bigcup_{n\geq 1} b_n
  \subseteq \bigcup_{n\geq 0} \Diamond b_{3n+i}$. By
  \eqref{alg:SumSup}, this means that
  \[ r= \bigvee_{n\geq 1} b_n \leq \bigvee_{n\geq 0} \Diamond
  b_{3n+i}\leq \Diamond\bigvee_{n\geq 0} b_{3n+i}=\Diamond q_i,\]
  where the latter inequality follows from \eqref{alg:Distr}.
  Therefore, $r\rightarrow \Diamond q_i=1$, so $B_1=B_2=\Box 1=1$.
  Finally, as $\bigcup_{n\geq 0} q_{3n+i} \cap \bigcup_{n\geq 0}
  q_{3n+j}=\emptyset$ if $1\leq i< j\leq 3$, it follows by
  \eqref{alg:Disjoint} that $q_i\wedge q_j=0$ for $1\leq i< j\leq 3$,
  whence $C_1=\Box-0=1$. Combining all this, we find that
  \[r\wedge \Box p\wedge -\Box^2 p\wedge A_1\wedge A_2 \wedge B_1 \wedge
  B_2\wedge C_1=r\wedge (\Box a\setminus \Box^2a)=b_1.\]
  However, we have $r=q_1\vee q_2\vee q_3$, and as the $q_i$ are
  disjoint, this means that $r\wedge -q_1\wedge -q_2=q_3$. By the above,
  $r\leq \Diamond q_3$, so
  \[ 0=r\wedge -\Diamond q_3=r\wedge\Box -q_3= r\wedge\Box-(r\wedge
  -q_1\wedge -q)=r\wedge\Box(r\rightarrow q_1\vee q_2).\] It follows
  that $\Diamond (r\wedge \Box(r\rightarrow q_1\vee q_2))=0$,
  contradicting $A$ as $b_1>0$. We conclude that
  $\mathfrak{A}\not\models A$.
\end{proof}
For $\mathcal{C}$ some class of BAOs, we define $\Delta
\models_{\mathcal{C}} \Gamma$ if for every $\mathfrak{A}\in
\mathcal{C}$, $\mathfrak{A}\models \Delta$ only if
$\mathfrak{A}\models \Gamma$.
\begin{cor}
  Let $\mathcal{C}$ be any class of complete BAOs. Then
  $\{A,B,C,D,E\}\models_{\mathcal{C}} F$.
\end{cor}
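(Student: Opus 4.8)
The plan is to read the corollary off the lemma, the only genuine work being to reconcile the two formulations. Unfolding the definition of $\models_{\mathcal{C}}$, what must be shown is that for every $\mathfrak{A}\in\mathcal{C}$, validity of $A,B,C,D$ and $E$ in $\mathfrak{A}$ forces validity of $F$. So I would fix an arbitrary $\mathfrak{A}\in\mathcal{C}$, assume $\mathfrak{A}\models\{A,B,C,D,E\}$, and aim to conclude $\mathfrak{A}\models F$. Because $\mathcal{C}$ is by hypothesis a class of \emph{complete} BAOs, $\mathfrak{A}$ is complete, which is exactly the standing assumption the lemma requires (and which the lemma's proof genuinely uses when it forms the suprema $q_i$ and $r$).

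The key step is to bridge the hypothesis $\mathfrak{A}\models\{A,B,C,D,E\}$ to the hypothesis $\mathfrak{A}\models\mathbf{L}$ of the lemma, by observing that validity in a BAO is preserved by each of the three rules generating $\mathbf{L}$ from its axioms. Indeed, every propositional tautology equals $1$ under any assignment into a Boolean algebra; modus ponens preserves validity, since $\varphi=1$ together with $-\varphi\vee\psi=1$ gives $\psi=1$; uniform substitution preserves validity, since an instance of a formula that equals $1$ under \emph{every} assignment again equals $1$ under every assignment; and necessitation preserves validity, as $\Box 1=-\Diamond 0=1$ in any BAO. Hence from the validity of the five axioms $A,B,C,D,E$ one gets $\mathfrak{A}\models\varphi$ for every theorem $\varphi$ of $\mathbf{L}$, that is, $\mathfrak{A}\models\mathbf{L}$.

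With $\mathfrak{A}$ complete and $\mathfrak{A}\models\mathbf{L}$ in hand, the lemma applies directly and yields $\mathfrak{A}\models F$; since $\mathfrak{A}\in\mathcal{C}$ was arbitrary, this is precisely $\{A,B,C,D,E\}\models_{\mathcal{C}} F$. I do not expect a real obstacle, as all the substantive reasoning---the inductive construction of the $b_n$, the covering property \eqref{thm:Cover}, and the disjoint choice of suprema $q_i,r$ exploiting completeness---has already been discharged in the lemma. The one point deserving care, and hence the main (if modest) obstacle, is the bookkeeping just described: verifying that passing from the finite axiom set to the full logic $\mathbf{L}$ stays within the formulas valid in $\mathfrak{A}$, so that the lemma's premise is legitimately met.
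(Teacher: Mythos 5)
Your proposal is correct and matches the paper's intent exactly: the paper offers no separate proof of the corollary, treating it as an immediate consequence of the preceding lemma, which is precisely how you derive it. Your extra bookkeeping step---that validity of the axioms $A,B,C,D,E$ in a BAO propagates to all of $\mathbf{L}$ because propositional tautologies are valid and modus ponens, substitution and necessitation preserve validity---is sound and only makes explicit what the paper leaves implicit (indeed the lemma's proof uses only the validity of $A$ and of instances of $B,C,D,E$, so even less is needed).
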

\begin{lemma}
  $F\notin \mathbf{L}$.
\end{lemma}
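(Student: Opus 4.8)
The plan is to prove non-derivability by a semantic argument: I will produce a BAO $\mathfrak{B}$ that validates each of $A,B,C,D,E$ but refutes $F$. Since the set of formulas valid in a fixed BAO is closed under modus ponens, substitution and necessitation, validity of the axioms forces validity of every theorem of $\mathbf{L}$; hence if $\mathfrak{B}\not\models F$ then $F\notin\mathbf{L}$. The first Lemma tells us exactly where to look: no \emph{complete} BAO can serve, so $\mathfrak{B}$ must be incomplete, and more precisely the joins $r=\bigvee_{n\ge 1}b_n$ and $q_i=\bigvee_{n\ge 0}b_{3n+i}$ that drive the derivation of $F$ there must fail to exist in $\mathfrak{B}$.

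Concretely I would take, as in the paper's preliminaries, a subalgebra $\mathfrak{B}$ of $\langle \wp(W),\cap,{}^c,\emptyset,m_R\rangle$ for a suitable frame $\langle W,R\rangle$. The relation $R$ must be reflexive (forced by $C$) but \emph{non-transitive} (necessary, since $F$ is valid on every transitive frame), and the algebra $\mathcal{A}\subsetneq\wp(W)$ of carriers of $\mathfrak{B}$ must be a proper subalgebra closed under $\cap$, ${}^c$ and $m_R$. The natural skeleton is an $\omega$-indexed chain whose levels realise the elements $b_n$, with $\mathcal{A}$ chosen to be (a variant of) the finite--cofinite algebra: each level $b_n$ is admissible and finite joins are admissible, but an infinite disjoint join such as $r$ is neither finite nor cofinite, so it has no least upper bound in $\mathcal{A}$ and the supremum simply does not exist. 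This is precisely the feature that blocks the first Lemma's construction. Note that the restriction to $\mathcal{A}$ is doing essential work: the axioms $D$ and $E$ must \emph{fail} on the full powerset algebra $\wp(W)$, for otherwise that complete BAO would validate $\mathbf{L}$ and hence $F$, contradicting the non-transitivity of $R$.

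Since $\mathbf{L}$ is exactly the logic of \cite{Thomason74}, the existence of such a refuting general frame is the content of Thomason's original Kripke-incompleteness theorem, and the cleanest route is simply to invoke it: Thomason exhibits a frame on which every axiom of $\mathbf{L}$ is valid but $F$ is not, which yields $F\notin\mathbf{L}$ at once. If instead one carries out the construction from scratch, the main obstacle is the verification that $A$, $D$ and $E$ hold under \emph{all} admissible valuations. This is a delicate combinatorial check, since one must simultaneously ensure that passing to $\mathcal{A}$ kills every potential admissible counterexample to these axioms while still leaving a single admissible valuation of $p$ that witnesses $\Box p\nleq\Box^2 p$; the tension is that the same coarseness of $\mathcal{A}$ which destroys the joins $r,q_i$ must not inadvertently revalidate $F$ or invalidate $D$ or $E$.
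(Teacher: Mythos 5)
Your proposal is correct and takes essentially the same route as the paper, whose proof of this lemma is likewise just a citation of Thomason's veiled recession frame --- which is indeed, as you guessed, an $\omega$-chain equipped with (a variant of) the finite--cofinite algebra, i.e.~an incomplete BAO that validates $A,B,C,D,E$ but refutes $F$. One side remark is off, however: the first Lemma only forces that not \emph{all} of $A$, $D$, $E$ can survive on the full powerset $\wp(W)$, and in Thomason's construction it is $A$ (not $D$ or $E$, which are frame-valid on the recession frame) that fails once non-admissible valuations such as the mod-$3$ partition into the $q_i$ become available.
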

\begin{proof} 
  See \cite{Thomason74}.  Thomason proofs the lemma by showing that
  the veiled recession frame, which is in fact (as it should be) an
  incomplete BAO, validates $\mathbf{L}$ while $\neg F$ can be
  satisfied on it.
\end{proof} 
The lemmas give us the following:
\begin{thm}
  $\mathbf{L}$ is completely incomplete.
\end{thm}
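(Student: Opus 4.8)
The plan is to show that $\mathbf{L}$ is completely incomplete by combining the two lemmas. Recall that a logic is \emph{completely incomplete} when it is incomplete with respect to any class of complete BAOs; concretely, this means there is a formula $\varphi\notin\mathbf{L}$ that is nonetheless valid on every complete BAO validating $\mathbf{L}$. The strategy is to take $\varphi:=F$ as the witnessing formula.

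\medskip

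First I would observe that every complete BAO validating $\mathbf{L}$ must in particular validate each axiom of $\mathbf{L}$, hence validate $A$, $B$, $C$, $D$ and $E$. By the Corollary (equivalently, by the first Lemma applied to each such algebra), any complete BAO satisfying $\{A,B,C,D,E\}$ also satisfies $F$. Therefore $F$ is valid on every complete BAO on which $\mathbf{L}$ is valid. This establishes one half of the required dichotomy: the semantic side forces $F$.

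\medskip

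Second I would invoke the second Lemma, $F\notin\mathbf{L}$, which gives the syntactic side: $F$ is \emph{not} a theorem of $\mathbf{L}$. Putting the two halves together, $F$ is a formula valid on every complete BAO for $\mathbf{L}$ yet not derivable in $\mathbf{L}$. Hence no class of complete BAOs can be sound and complete for $\mathbf{L}$: any such class would either fail to validate some theorem of $\mathbf{L}$ (violating soundness) or, being contained in the complete BAOs validating $\mathbf{L}$, would validate $F$ and thus prove a nontheorem valid on the class (violating completeness). This is exactly what it means for $\mathbf{L}$ to be completely incomplete.

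\medskip

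The main obstacle here is not in this final assembly, which is a short logical deduction, but rather lies entirely in the two lemmas that feed into it. The substantive content is the first Lemma, whose delicate inductive argument (establishing \eqref{thm:Cover} and constructing the witnessing elements $p,q_i,r$ from the strictly decreasing chain $\Box^n a$) does the real work; the assembly step itself requires only care in matching the definition of complete incompleteness against the two established facts, and in noting that soundness of $\mathbf{L}$ guarantees every theorem holds on the class while the nontheorem $F$ is nonetheless forced.
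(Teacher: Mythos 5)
Your proposal is correct and follows exactly the paper's route: the paper derives the theorem immediately from the two lemmas (via the corollary), just as you do, with $F$ as the witnessing non-theorem that is nonetheless valid on every complete BAO validating $\mathbf{L}$. Your write-up merely spells out the short assembly step that the paper leaves implicit.
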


\section{Acknowledgements}
This paper is the result of Eric Pacuit asking me to write some paper
about neighborhood semantics for modal logic for a class of his. I am
the first to admit that the connection between the present result and
neighborhood semantics is lateral at best, but my blatant disregard
for the assigned subject matter notwithstanding he helped me out
gladly on several occasions. For this I thank him. I thank my fellow
Master of Logic students Gaelle Fontaine and Christian Kissig for
discussions.


\begin{thebibliography}{99}
\bibitem{Fine74} Kit Fine: `An incomplete logic containing S4',
  \emph{Theoria} 40 (1974) pp.~23--29.
\bibitem{Gerson75} Martin Gerson: `The Inadequacy of the Neighborhood
  Semantics for Modal Logic', \emph{The Journal of Symbolic Logic} 40
  (1975) pp.141--148.
\bibitem{Litak03} Tadeusz Litak: `Modal Incompleteness Revisited',
  \emph{Studia Logica} 73 pp.~1--15.
\bibitem{Thomason74} S.K.~Thomason: `An incompleteness theorem in
  modal logic', \emph{Theoria} 40 (1974) pp.~30--34.
\end{thebibliography}
\end{document}